\DeclareMathOperator{\argmin}{argmin}
\def\eps{{\varepsilon}}
\def\O{\Omega}
\def\R{\mathbb{R}}
\def\HH{\mathcal{H}}
\def\M{\mathcal{M}}
\def\eps{\varepsilon}
\newcommand{\cp}{\mathrm{cap}}
\newcommand{\be}{\begin{equation}}
\newcommand{\ee}{\end{equation}}
\newcommand{\bib}[4]{\bibitem{#1}{\sc#2: }{\it#3. }{#4.}}
\numberwithin{equation}{section}
\theoremstyle{plain}
\newtheorem{theo}{Theorem}[section]
\newtheorem{prop}[theo]{Proposition}
\newtheorem{defi}[theo]{Definition}
\theoremstyle{definition}
\newtheorem{rema}[theo]{Remark}
\title[On the continuity of the Continuous Steiner Symmetrization]{On the continuity of the Continuous Steiner Symmetrization}
\author[G. Buttazzo]{Giuseppe Buttazzo}
\date{}
\begin{document}

\maketitle

\hfill{\it Dedicated to Roger Wets for his 85th birthday}

\begin{abstract}
Starting from the Brock's construction of Continuous Steiner Symmetrization of sets, the problem of modifying continuously a given domain up to obtain a ball, preserving its measure and with decreasing first eigenvalue of the Laplace operator, is considered. For a large class of cases it is shown this is possible, while the general question remains still open.
\end{abstract}

\textbf{Keywords:} Steiner symmetrization, shape optimization, torsional rigidity, first eigenvalue, $\gamma$-convergence.

\textbf{2010 Mathematics Subject Classification:} 49Q10, 35P15, 49R50, 49J45, 49R05.

%%%%%%%%%%%%%%%%%%%%%%%%%%%%%%%%%%%%%%%%%%%%%%%%%%
\section{Introduction}\label{sintro}

The problem of {\it rounding} more and more a given set $\O\subset\R^d$, keeping fixed its measure and asymptotically reaching a ball of the same measure, enters in a number of problems and has been widely considered in the literature. More precisely, given a bounded open set $\O\subset\R^d$, the goal is to construct a family of domains $(\O_t)$, with $t\in[0,1]$, such that $\O_0=\O$, $\O_1=\O^*$ where $\O^*$ is a ball of the same measure as $\O$, and $|\O_t|=|\O|$ for all $t\in[0,1]$, where by $|\cdot|$ we denote the Lebesgue measure.

In addition, we require that the mapping $t\mapsto\O_t$ be {\it continuous} with respect to some suitable topology, and that the family $(\O_t)$ satisfy some monotonicity property that will be specified later.

We notice that, without the last monotonicity requirement, a very simple construction would provide a solution. Take indeed a set $\O$ and a point $x_0$ far enough from $\O$; denoting by $B(x_0,r)$ the ball of center $x_0$ and radius $r$ and by $\omega_d$ the Lebesgue measure of the unit ball in $\R^d$, the family
$$\O_t=(1-t)^{1/d}\O\cup B(x_0,r_t)\qquad\text{with }r_t=\bigg(\frac{t|\O|}{\omega_d}\bigg)^{1/d}$$
satisfies the measure constraint $|\O_t|=|\O|$, is such that $\O_0=\O$ and $\O_1=\O^*$, and is continuous in several useful topologies. An example of such a family $(\O_t)$ is illustrated in Figure \ref {fig1}.

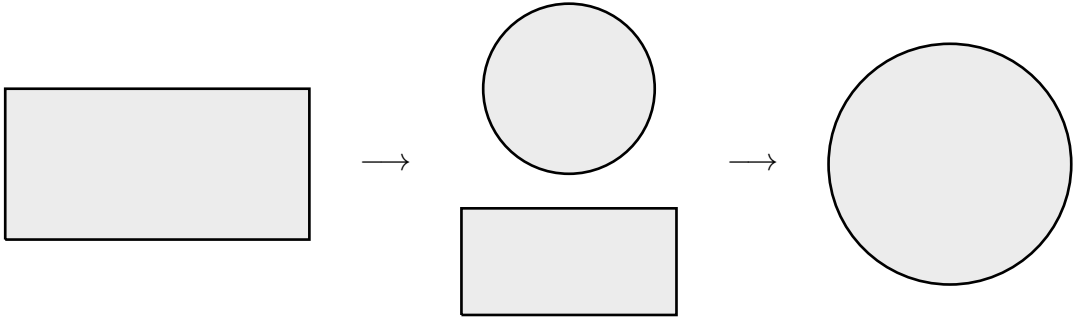
\begin{figure}[htbp]
\begin{tikzpicture}
\filldraw[fill=gray!15!white,line width=1](0,1)--(0,3)--(4,3)--(4,1)--(0,1)
(5,2) node {$\longrightarrow$}
(6,0)--(6,1.414)--(8.828,1.414)--(8.828,0)--(6,0)
(7.414,3) circle [radius=1.128cm]
(9.828,2) node {$\longrightarrow$}
(12.424,2) circle [radius=1.596cm];
\end{tikzpicture}
\caption{The sets $\O_0$, $\O_{1/2}$, $\O_1$ when $\O$ is the rectangle $]0,2[\times]0,1[$.}\label{fig1}
\end{figure}

The additional monotonicity conditions that we impose consists in the requirement that a suitable shape functional $F$ is monotone. For instance we could consider:
\begin{itemize}
\item[-]$F(\O)=P(\O)$, the {\it perimeter} in the sense of De Giorgi, and we require $P(\O)$ is nonincreasing;
\item[-]$F(\O)=\HH^{d-1}(\O)$, the {\it Hausdorff $d-1$ dimensional measure}, and we require $\HH^{d-1}(\O)$ is nonincreasing;
\item[-]$F(\O)=T(\O)$, the {\it torsional rigidity} defined below, and we require $T(\O)$ is nondecreasing;
\item[-]$F(\O)=\lambda(\O)$, the {\it first eigenvalue} of the Dirichlet Laplacian defined below, and we require $\lambda(\O)$ is nonincreasing;
\item[-]$F(\O)=h(\O)$, the {\it Cheeger constant}, and we require $h(\O)$ is nonincreasing.
\end{itemize}
In this paper we focus the attention mostly on the first eigenvalue $\lambda(\O)$ and on the torsional rigidity $T(\O)$.

More precisely, $\lambda(\O)$ is the first eigenvalue of the Laplace operator $-\Delta$ with Dirichlet conditions on $\partial\O$, that is the minimal value $\lambda$ such that the PDE
$$\begin{cases}
-\Delta u=\lambda u&\text{in }\O\,,\\
u\in H^1_0(\O),
\end{cases}$$
has a nonzero solution. Equivalently, by the min-max principle (see for instance \cite{he06}) $\lambda(\O)$ can be defined through the minimization of the Rayleigh quotient, as
$$\lambda(\O)=\min\bigg\{\Big[\int_\O|\nabla u|^2\,dx\Big]\Big[\int_\O u^2\,dx\Big]^{-1}\ :\ u\in H^1_0(\O),\ u\ne0\bigg\}.$$
An important bound for $\lambda(\O)$ is the {\it Faber-Krahn inequality} (see for instance~\cite{he06}, \cite{HP18})
$$\lambda(\O^*)\le\lambda(\O)\,,$$
which can be stated in a scaling free form as
$$|\O|^{2/d}\lambda(\O)\ge|B|^{2/d}\lambda(B),$$
where $B$ is any ball in $\R^d$.

The torsional rigidity $T(\O)$ is defined as $\int_\O u_\O\,dx$, where $u_\O$ is the unique solution of the PDE
$$\begin{cases}
-\Delta u=1&\text{in }\O\,,\\
u\in H^1_0(\O)\,,
\end{cases}$$
or equivalently through the maximization problem
$$T(\O)=\max\left\{\Big[\int_\O u\,dx\Big]^2\Big[\int_\O|\nabla u|^2\,dx\Big]^{-1}\ :\ u\in H^1_0(\O),\ u\ne0\right\}\,,$$
where the maximum is reached by $u_\O$ itself. Also for $T(\O)$ an important inequality holds, the {\it Saint-Venant inequality}
$$T(\O)\le T(\O^*)\,,$$
which can be stated in a scaling free form as
$$|\O|^{-(d+2)/d}T(\O)\le|B|^{-(d+2)/d}T(B)$$
where $B$ is any ball in $\R^d$. 

The monotonicity properties we require to the family $(\O_t)$ are then:
\begin{itemize}
\item[-]the mapping $t\mapsto\lambda(\O_t)$ is nonincreasing;
\item[-]the mapping $t\mapsto T(\O_t)$ is nondecreasing.
\end{itemize}

Concerning the continuity of the map $t\mapsto\O_t$ our requirement is that the solutions $u_t$ of the PDEs
$$\begin{cases}
-\Delta u_t=f&\text{in }\O_t\,,\\
u_t\in H^1_0(\O_t)\,,
\end{cases}$$
vary continuously in $t$ with respect to the strong $H^1(\R^d)$ convergence, for every right-hand side $f\in L^2(\R^d)$. This is the $\gamma$-convergence, that we describe more precisely in Section \ref{sgamma}.

When instead of a continuous family $(\O_t)$ we consider the discrete case of a sequence $(\O_n)$ such that
\begin{itemize}
\item[(i)]$\O_0=\O$, $|\O_n|=|\O|$ for every $n$, $\O_n\to\O^*$ in the $\gamma$-convergence,
\item[(ii)]$\lambda(\O_{n+1})\le\lambda(\O_n)$ and $T(\O_{n+1})\ge T(\O_n)$ for every $n$,
\end{itemize}
we have the problem that was first considered by Steiner, who proposed to use successive symmetrizations through different hyperplanes. More precisely, given a domain $\O\subset\R^d$ and a direction $\nu$, the {\it Steiner symmetrization} of $\O$ with respect to $\nu$ is defined as
$$\O^*_\nu=\bigg\{x\in\R^d\ :\ |x\cdot\nu|<\frac{\varphi\big(\pi(x)\big)}2\bigg\}\,.$$
Here $\pi(x)=x-\nu(x\cdot\nu)$ is the projection of a point $x\in\R^d$ on the hyperplane orthogonal to $\nu$ and, for each $y$ in this hyperplane,
$$\varphi(y)=\HH^1\big(\O\cap\pi^{-1}(y)\big)$$
is the length of the $y$-section of $\O$, where by $\HH^1$ we denote the 1-dimensional Hausdorff measure.

Note that the set $\O^*_\nu$ has the same volume of $\O$ and is symmetric with respect to the hyperplane orthogonal to $\nu$. In addition, it is well-known (see for instance~\cite{ALT91}) that the Steiner symmetrization decreases the first eigenvalue and increases the torsional rigidity, that is
$$\lambda(\O^*_\nu)\le\lambda(\O)\qquad\text{and}\qquad T(\O^*_\nu)\ge T(\O)\,.$$
By repeating this symmetrization procedure for a dense sequence of directions $\nu$, one obtains a sequence $\O_n$ of sets, all with the same measure, which $\gamma$-converge as $n\to\infty$ to the ball $\O^*$. 

The question is now to pass from the discrete Steiner symmetrization to a continuous one. Since successive Steiner symmetrizations allow to pass from a generic set to a ball, it is enough to construct a continuous family $\O_t$ of sets which transforms a set $\O$ into its Steiner symmetrization $\O^*_\nu$ for a fixed direction $\nu$. An explicit construction of a family $\O_t$ was proposed by Brock in \cite{bro95} (see also \cite{bro00}) and was called {\it Continuous Steiner Symmetrization}. We shortly recall the Brock's construction in Section \ref{sbrock}.

Unfortunately, the Brock's construction provides the $\gamma$-continuity of the family $\O_t$ only in very particular situations, as for instance when the initial domain $\O$ is convex, while in general discontinuities may occur, due to irregularities of the domains $\O_t$. On the other hand, the $\gamma$-continuity would be very useful in several situations, as for instance in the study of some Blaschke-Santal\'o diagrams, as illustrated in \cite{BP21}.

In the present paper we show that a modification of Brock's construction could be enough to provide the required $\gamma$-continuity of the family $\O_t$, at least for a larger class of domains $\O$. In \cite{BP21} a similar construction was made for polyhedral domains $\O$. Even if the arguments are not complete, we believe it could help to better understand the difficulties behind the Continuous Steiner Symmetrization.

In the last section we consider a possible alternative approach based on the De Giorgi theory of minimizing movements.

%%%%%%%%%%%%%%%%%%%%%%%%%%%%%%%%%%%%%%%%%%%%%%%%%%
\section{The $\gamma$-convergence}\label{sgamma}

In this section we recall the definition and the main properties of $\gamma$-convergence; for all details, proofs, and generalization to the class of capacitary measures, we refer the interested reader to \cite{BB05}. For simplicity, we make the assumption that all the domains we consider are included in a given bounded open subset $D$ of $\R^d$, which is satisfied for the domains we consider later. In the following, for every domain $\O$, a function in $H^1_0(\O)$ is considered extended by zero on $\R^d\setminus\O$.

\begin{defi}\label{dgamma}
A sequence $(\O_n)$ of domains ia said to $\gamma$-converge to a domain $\O$ if for every $f\in L^2(\R^d)$ the solutions $u_{n,f}$ of the PDEs
$$\begin{cases}
-\Delta u=f&\text{in }\O_n\\
u\in H^1_0(\O_n)
\end{cases}$$
converge weakly in $H^1(\R^d)$ to the solution $u_f$ of the PDE
$$\begin{cases}
-\Delta u=f&\text{in }\O\\
u\in H^1_0(\O)\,.
\end{cases}$$
\end{defi}

We summarize here below the main properties of the $\gamma$-convergence. We refer to \cite{BB05} for all the details, properties, and proofs.

\begin{itemize}

\item The weak $H^1(\R^d)$ convergence of $u_{n,f}$ to $u_f$ is equivalent to the strong $H^1(\R^d)$ convergence. Indeed, integrating by parts we obtain
$$\int|\nabla u_{n,f}|^2dx=\int u_{n,f}f\,dx\to\int u_f f\,dx=\int|\nabla u_f|^2dx.$$

\item In the definition above it is not difficult to show that it is equivalent to require the weak $H^1(\R^d)$ convergence of $u_{n,f}$ to $u_f$ for every $f\in L^2(\R^d)$ or for every $f\in H^{-1}(\R^d)$. Indeed, if $f\in H^{-1}(\R^d)$ it is enough to approximate $f$ by a sequence $f_k\in L^2(\R^d)$, in the $H^{-1}$ norm, to obtain for every test function $\phi$
\[\begin{split}
&\left|\int\nabla u_{n,f}\nabla\phi\,dx-\int\nabla u_f\nabla\phi\,dx\right|=\left|\langle f,\phi\rangle_{H^1_0(\O_n)}-\langle f,\phi\rangle_{H^1_0(\O)}\right|\\
&\qquad\qquad\le\left|\langle f_k,\phi\rangle_{H^1_0(\O_n)}-\langle f_k,\phi\rangle_{H^1_0(\O)}\right|+\eps_k\|\phi\|\\
&\qquad\qquad=\left|\int\nabla u_{n,f_k}\nabla\phi\,dx-\int\nabla u_{f_k}\nabla\phi\,dx\right|+\eps_k\|\phi\|.
\end{split}\]
where $\eps_k\to0$. Passing to the limit first as $n\to\infty$ and then as $k\to\infty$ gives what claimed.

\item The $\gamma$-convergence can be defined in a similar way for {\it quasi-open} sets $\O\subset D$ or more generally for {\it capacitary measures} $\mu$ confined into $D$ (that is $\mu=+\infty$ outside $D$). Quasi-open sets are sets of positivity $\{u>0\}$ of functions $u\in H^1(\R^d)$, while capacitary measures are regular nonnegative Borel measures $\mu$ on $D$, possibly $+\infty$ valued, such that $\mu(E)=0$ for every Borel set $E\subset D$ with $\cp(E)=0$. For all details on quasi-open sets and capacitary measures we refer the interested reader to the book \cite{BB05}. Here we only recall that for a capacitary measure $\mu$ the corresponding PDE is formally written as
$$\begin{cases}
-\Delta u+\mu u=f&\text{in }D\\
u\in H^1_0(D)\cap L^2_\mu(D)
\end{cases}$$
and has to be intended in the weak sense, that is, $u\in H^1_0(D)\cap L^2_\mu(D)$ and
$$\int_D\nabla u\nabla\phi\,dx+\int_D u\phi\,d\mu=\langle f,\phi\rangle$$
for all $\phi\in H^1_0(D)\cap L^2_\mu(D)$. We notice that open sets or more generally quasi-open sets can be seen as capacitary measures: for a given domain $\O$ the capacitary measure representing it is the measure $\infty_{\O^c}$ defined as
$$\infty_{\O^c}(E)=\begin{cases}
0&\text{if }\cp(E\cap\O)=0\\
+\infty&\text{otherwise.}
\end{cases}$$

\item In Definition \ref{dgamma} it is possible to show (see Remark 4.3.10 of \cite{BB05}) that requiring the convergence of the solutions $u_n$ to $u$ for every right-hand side $f$ is equivalent to require the convergence $u_n\to u$ only for $f\equiv1$ and in the $L^2(D)$ sense. In particular, calling $u_\mu$ the unique solution of the PDE $-\Delta u+\mu u=1$ in $H^1_0(D)\cap L^2_\mu(D)$, the quantity
\be\label{gdist}
d_\gamma(\mu_1,\mu_2)=\|u_{\mu_1}-u_{\mu_2}\|_{L^2(D)}
\ee
is a distance on the space $\M$ of capacitary measures, which is equivalent to $\gamma$-convergence, and so $\M$ endowed with the distance $d_\gamma$ above is a compact metric space. Since the solutions $u_\mu$ are all equi-bounded (for instance they are all below by the solution $w$ of the Dirichlet problem $-\Delta w=1$ on $H^1_0(D)$, which is a bounded function) the $L^2$ norm in \eqref{gdist} can be replaced by any $L^p$ norm, with $1\le p<+\infty$. In particular, if $p=1$ and $\O_1\subset\O_2$ we have
$$\|u_{\O_1}-u_{\O_2}\|_{L^1}=\int u_{\O_2}dx-\int u_{\O_1}dx=T(\O_2)-T(\O_1),$$
and the $\gamma$-convergence is then reduced to the convergence of the corresponding torsional rigidities.

\item The first eigenvalue $\lambda(\O)$ (as well as all the other eigenvalues $\lambda_k(\O)$) and the torsional rigidity $T(\O)$ are continuous with respect to the $\gamma$-convergence.

\item The Lebesgue measure $|\O|$, or more generally integral functionals as $\int_\O f(x)\,dx$ with $f\ge0$ and measurable, are lower semicontinuous with respect to the $\gamma$-convergence on the domains $\O$.

\item As stated above, the space $\M$ of capacitary measures, endowed with the $\gamma$-convergence, is a compact metric space. On the contrary, the family of open sets (or also quasi-open sets) is not compact in $\M$; it is actually a dense subset of $\M$. The first example of a sequence of open sets $\O_n$ which $\gamma$-converges to a capacitary measure which is not a domain (actually to the Lebesgue measure) was given in \cite{ciomur}.

\item Several subclasses of $\M$ are dense with respect to the $\gamma$-convergence (see Proposition 4.3.7 and Remark 4.3.8 of \cite{BB05}). For instance:
\begin{enumerate}
\item[-] the class of measures $a(x)\,dx$ with $a\ge0$ and smooth;
\item[-] the class of smooth domains $\O\subset D$.
\item[-] the class of polyhedral domains $\O\subset D$;
\item[-] the class of measures of the form $a(x)\,d\HH^{d-1}$ with $a\ge0$ and smooth, where $\HH^{d-1}$ is the $d-1$ dimensional Hausdorff measure;
\item[-] the class of measures of the form $\HH^{d-1}\lfloor S$, where $S\subset D$ is a smooth $d-1$ manifold.
\end{enumerate}
\end{itemize}

%%%%%%%%%%%%%%%%%%%%%%%%%%%%%%%%%%%%%%%%%%%%%%%%%%
\section{The Brock's construction}\label{sbrock}

We summarize rapidly here the construction by Brock (see \cite{bro95}, \cite{bro00}) of the continuous Steiner symmetrization, together with the properties important for our purpose. The first construction is for the unidimensional case; here taking the variable $t$ in $[0,+\infty]$ or in $[0,1]$ does not make any real difference.

\begin{itemize}

\item If $I$ is the interval $]a,b[$, then the continuous Steiner symmetrization $I^t$ is the interval $]a^t,b^t[$, where
$$a^t=\big(a-b+e^{-t}(a+b)\big)/2,\qquad b^t=\big(b-a+e^{-t}(a+b)\big)/2.$$

\item If $A$ is an open subset of $\R$ we consider the properties:
\begin{enumerate}
\item[(i)] $A(0)=A$;
\item[(ii)] if $I$ is an interval with $I\subset A(s)$, then $I^t\subset A(s+t)$ for every $t\ge0$.
\end{enumerate}
We define then the continuous Steiner symmetrization $A^t$ as
$$A^t=\bigcap\big\{A(t)\ :\ A(t)\text{ satisfies (i) and (ii)}\big\}.$$
In \cite{bro95} Brock proves that if $A$ is open then $A^t$ are open sets; in addition the monotonicity property
$$A\subset B \Longrightarrow A^t\subset B^t\text{ for every }t$$
holds.

\item Finally, if $A\subset\R$ is only measurable, we have
$$A=\bigcap_n A_n\setminus N$$
with $A_n$ open sets and $N$ Lebesgue negligible. We then define the continuous Steiner symmetrization $A^t$ of $A$ as
$$A^t=\bigcap_n A_n^t.$$
This definition is unique up to a nullset, and we still call continuous Steiner symmetrization a family $A^t$ such that $|A^t\triangle(\bigcap_n A_n^t)|=0$.

\end{itemize}

We can now pass to define the continuous Steiner symmetrization for subsets of $\R^d$, with respect to a hyperplane that, with no loss of generality, we can suppose to be $R^{d-1}$. For a general set $A$ we define the projection of $A$ on $\R^{d-1}$ as
$$A'=\big\{x'\in\R^{d-1}\ :\ (x',y)\in A\text{ for some }y\in\R\big\},$$
and for $x'\in A'$ the intersection of $A$ with $(x',\R)$ as
$$A(x')=\big\{y\in\R\ :\ (x',y)\in A\big\}.$$
Note that $A(x')$ is a one-dimensional set. When $A$ is an open subset of $\R^d$ we define its continuous Steiner symmetrization $A^t$ by
\be\label{cssd}
A^t=\big\{x=(x',y)\ :\ x'\in A',\ y\in(A(x'))^t\big\}.
\ee
If $A\subset\R^d$ is only measurable, we define its continuous Steiner symmetrization by the same formula as \eqref{cssd}, but up to Lebesgue negligible sets.

We stress that, for a bounded quasi-open set $A$, the previous construction only provides a measurable set defined up to a set of zero Lebesgue measure. In order to obtain that the symmetrized sets be still quasi-open and defined quasi-everywhere, it is convenient, for a bounded quasi-open set $A$, to define (by an abuse of notation) the symmetrized set $A^t$ in the following way: consider a decreasing sequence of bounded open sets $(A_n)$ with $\cp(A_n\setminus A)\to0$ and $A\subset A_n$. For any $t\in[0,1]$ the set $A_n^t$ is well defined, and by monotonicity we may define $A_n^t\supset A_{n+1}^t$. Then $(A_n^t)$ is $\gamma$-convergent and we define
\[A^t=\gamma-\lim_{n\to\infty}A_n^t.\]
In this way, the set $A^t$ is quasi-open. More details on this issue can be found in \cite{BB05}; in particular, the proofs that the construction above is independent of the sequence $A_n$ and that the Lebesgue measure is
preserved, are still missing.

The continuous Steiner symmetrization can be defined for any positive measurable function $u$ by symmetrizing its level sets:
\[\forall s>0\qquad\{u^t>s\}:=\{u>s\}^t.\]

The main properties of the Brock's construction are summarized here below, where $\lambda_k(\O)$ denotes the $k$-th eigenvalue of the Dirichlet Laplacian in $\O$.

\begin{prop}
For every bounded quasi-open set $\O\subset\R^d$ and every positive integer $k$ the mapping $t\mapsto\lambda_k(\O^t)$, is lower semicontinuous on the left and upper semicontinuous on the right.
\end{prop}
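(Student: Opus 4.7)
The plan is to reduce the statement to the monotonicity $\lambda_k(\O^t)\le\lambda_k(\O^s)$ for $0\le s\le t$; once that is in hand, both one-sided semicontinuity claims follow automatically from elementary properties of monotone functions.

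The first ingredient I would establish is the semigroup property $(\O^s)^t=\O^{s+t}$ for $s,t\ge0$. For an interval $(a,b)\subset\R$ this is a direct check from the explicit formulas: one finds $a^s+b^s=e^{-s}(a+b)$ and $b^s-a^s=b-a$, from which $(a^s)^t=\big(a^s-b^s+e^{-t}(a^s+b^s)\big)/2=a^{s+t}$, and likewise $(b^s)^t=b^{s+t}$. The identity then extends to open subsets of $\R$ via Brock's axiomatic definition (properties (i)--(ii) recalled in Section \ref{sbrock}), passes to open subsets of $\R^d$ through the slicing formula \eqref{cssd} together with the identity $(A^s)(x')=(A(x'))^s$, and finally to a bounded quasi-open set $\O$ by approximating with a decreasing family $(A_n)$ of open sets and exchanging the semigroup identity with the $\gamma$-limit used to define $\O^t$.

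The second ingredient is the Brock-type inequality $\lambda_k(\O^t)\le\lambda_k(\O)$, valid for every $t\ge0$, every positive integer $k$, and every bounded quasi-open $\O$ (for open sets this is the content of \cite{bro00}; the extension to quasi-open sets follows from the $\gamma$-continuity of $\lambda_k$ recalled in Section \ref{sgamma} applied to the approximating sequence). Combined with the semigroup property, this gives, for $0\le s\le t$,
\[\lambda_k(\O^t)=\lambda_k\bigl((\O^s)^{t-s}\bigr)\le\lambda_k(\O^s),\]
so $\phi(t):=\lambda_k(\O^t)$ is non-increasing on $[0,\infty)$.

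Once $\phi$ is known to be non-increasing, both semicontinuity claims are immediate: if $s_n\uparrow t_0$ then $\phi(s_n)\ge\phi(t_0)$, hence $\liminf_n\phi(s_n)\ge\phi(t_0)$ (left lower semicontinuity); if $s_n\downarrow t_0$ then $\phi(s_n)\le\phi(t_0)$, hence $\limsup_n\phi(s_n)\le\phi(t_0)$ (right upper semicontinuity). The main obstacle is the Brock-type inequality for higher eigenvalues: for $k=1$ it comes from a P\'olya--Szeg\H{o} rearrangement argument applied to the positive first eigenfunction, but for $k\ge 2$ one must simultaneously control the Rayleigh quotient on a $k$-dimensional subspace of test functions, which is the delicate technical heart of Brock's work. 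A secondary nuisance is the semigroup identity at the quasi-open level, which interacts with the warning at the end of Section \ref{sbrock} that the well-posedness of $A^t$ for quasi-open $A$ is not yet fully settled.
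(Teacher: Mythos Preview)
Your reduction rests on the ``Brock-type inequality'' $\lambda_k(\O^t)\le\lambda_k(\O)$ for every $t\ge0$, from which you deduce global monotonicity of $t\mapsto\lambda_k(\O^t)$. This inequality is \emph{false} for $k\ge2$. Take $\O\subset\R^2$ to be the disjoint union of two unit disks with centers on the $x$-axis, and symmetrize in the $x$-direction. Each horizontal slice has total length $4\sqrt{1-y^2}$, so the full Steiner symmetral $\O^\infty$ is the ellipse $\{x^2/4+y^2<1\}$. Now $\lambda_2(\O)=\lambda_1(\text{unit disk})=j_{0,1}^2$, while by the Krahn--Szeg\H{o} inequality the minimum of $\lambda_2$ among planar domains of area $2\pi$ is attained \emph{only} by two equal disks; hence $\lambda_2(\O^\infty)>\lambda_2(\O)$, and $t\mapsto\lambda_2(\O^t)$ is not non-increasing. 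The references \cite{bro95,bro00} do not claim monotonicity of higher eigenvalues; the P\'olya--Szeg\H{o} argument you invoke gives it only for $k=1$. So your scheme collapses precisely at the step you flagged as ``the delicate technical heart'', but the obstacle is not merely technical: the statement you need is wrong.

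The paper itself gives no proof of the proposition; it is quoted as a summary of Brock's results. The mechanism behind it is not monotonicity of $\lambda_k$ but one-sided Mosco-type convergence of the Sobolev spaces $H^1_0(\O^s)$. For right upper semicontinuity one uses the semigroup (your first ingredient is genuinely useful here): given $u\in H^1_0(\O^t)$, the functions $(u^+)^{\,s-t}-(u^-)^{\,s-t}$ belong to $H^1_0(\O^s)$ and converge to $u$ strongly in $H^1$ as $s\downarrow t$, which by the Courant--Fischer min--max yields $\limsup_{s\to t^+}\lambda_k(\O^s)\le\lambda_k(\O^t)$. For left lower semicontinuity one shows that any weak $H^1$ limit of $u_n\in H^1_0(\O^{s_n})$ with $s_n\uparrow t$ lies in $H^1_0(\O^t)$, by applying CSS with parameter $t-s_n\to0$ to $|u_n|$ and using $(\O^{s_n})^{t-s_n}=\O^t$. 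Replace your second ingredient by this Mosco argument and the proof goes through for all $k$.
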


When the starting set $\O$ is convex, or more generally when the one-dimensional sections $\O(x')$ above are intervals, the $\gamma$-continuity actually occurs. However, this is not always the case, as the example of Figure \ref{Figdisc} shows. Up to the moment when the internal fracture appears the $\gamma$-continuity is verified; on the other hand, the Brock's construction removes the fracture instantaneously, and the $\gamma$-continuity is lost.

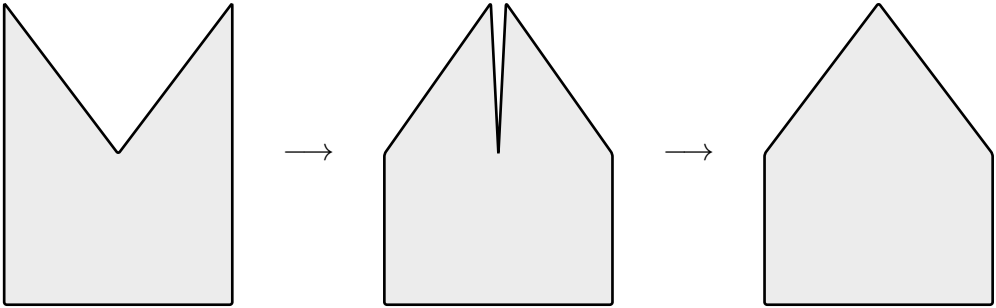
\begin{figure}[htbp]
\begin{tikzpicture}
\filldraw[fill=gray!15!white,line width=1,rounded corners=1pt]
(0,0)--(0,4)--(1.5,2)--(3,4)--(3,0)--cycle
(4.0,2) node {$\longrightarrow$}
(5,0)--(5,2)--(6.4,4)--(6.5,2)--(6.6,4)--(8,2)--(8,0)--cycle
(9.0,2) node {$\longrightarrow$}
(10,0)--(10,2)--(11.5,4)--(13,2)--(13,0)--cycle;
\end{tikzpicture}
\caption{A set $\O$ such that $t\mapsto\lambda(\O_t)$ is discontinuous.}\label{Figdisc}
\end{figure}

Since the torsional rigidity $T(\O_t)$ is increasing along the family $(\O_t)$, it has only countably many discontinuity points. Let $t_0$ be one of these points and assume that at $t_0$ we have two domains $\O^-,\O^+$ such that $\O^-\subset\O^+$ and
\be\label{o+o-}
\begin{cases}
T(\O_t)\to T(\O^-)\quad\hbox{as }t\to t_0\hbox{ from the left}\\
T(\O_t)\to T(\O^+)\quad\hbox{as }t\to t_0\hbox{ from the right}
\end{cases}\ee
In other words $\O^-$ is the domain with fractures, while $\O^+$ is the domain where the fractures have been removed.

\begin{rema}
In the one-dimensional case the existence of a $\gamma$-continuous family $(\O_t)$ cannot be obtained in general, since starting by $\O_0$ made of two segments and ending by $\O_1$ made of a single segment will necessarily produce a discontinuity of $T(\O_t)$ at some point $t_0$, independently of the construction of the family $(\O_t)$.
\end{rema}

In the case $d\ge2$ on the contrary, we can fill the discontinuity between $\O^-$ and $\O^+$ by constructing a $\gamma$-continuous family $(\O_t)$, with $\O_t$ increasing with respect to the set inclusion, and $\O_0=\O^-$, $\O_1=\O^+$.

\begin{theo}\label{01}
Let $d\ge2$ and let $\O_0\subset\O_1$ be two bounded open sets. Then there exists a $\gamma$-continuous family $\O_t$ of open sets ($t\in[0,1]$) such that
\be\label{monot}
\O_s\subset\O_t\qquad\text{for every }s<t.
\ee
\end{theo}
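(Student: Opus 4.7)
To construct the monotone $\gamma$-continuous family, the plan is to exhaust $\O_1\setminus\overline{\O_0}$ by countably many small pieces added one after the other, using the fact that in dimension $d\ge2$ a single point has zero capacity and hence a small open set can be grown continuously from a point without producing a jump in the torsional rigidity.

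Concretely, I would decompose
$$\O_1\setminus\overline{\O_0}=\bigcup_{n\ge1}Q_n$$
into a countable family of cubes with pairwise disjoint interiors (e.g.\ a Whitney decomposition), and partition $[0,1)$ into disjoint half-open intervals $[a_n,b_n)$ with $\sum_n(b_n-a_n)=1$. Denoting by $Q_n^s$ the open cube concentric with $Q_n$ and scaled by a factor $s\in[0,1]$ (so that $Q_n^0=\emptyset$), on the $n$-th slot I would grow $Q_n$ by setting
$$\O_t:=\O_0\cup\bigcup_{k<n}Q_k\cup Q_n^{s(t)}\qquad\hbox{for }t\in[a_n,b_n),\quad s(t)=\frac{t-a_n}{b_n-a_n}.$$
At $t=0$ this recovers $\O_0$; every $\O_t$ is open; and the family is increasing in $t$ by construction.

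Within each time slot the $\gamma$-continuity would follow from the $L^1$ characterization of $\gamma$-convergence in \eqref{gdist}: since the cubes have disjoint interiors and $Q_n^{s(t)}$ is disjoint from $\O_0$, one has
$$T(\O_t)=T\Bigl(\O_0\cup\bigcup_{k<n}Q_k\Bigr)+T(Q_n^{s(t)}),$$
and $T(Q_n^{s(t)})=s(t)^{d+2}\,T(Q_n)\to0$ as $t\to a_n^+$. Continuity at the junction points $t=b_n=a_{n+1}$ would then be automatic, since both one-sided limits coincide with $\O_0\cup\bigcup_{k\le n}Q_k$.

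The main obstacle would be the behaviour at $t=1$. The union $\bigcup_{t<1}\O_t$ equals $\O_0\cup(\O_1\setminus\overline{\O_0})$, which differs from $\O_1$ by the inner boundary $\partial\O_0\cap\O_1$, a set that in general has positive capacity. To recover $\O_1$ in the $\gamma$-sense one should either enlarge the decomposition by cubes straddling $\partial\O_0$ (and, in such cubes, grow only the portion lying outside $\O_0$), or append a final deformation step near $t=1$ that smoothly thickens $\O_t$ across the inner boundary. This is the delicate point of the argument, and is the analogue in the present setting of the filling-in procedure carried out in \cite{BP21} for polyhedral pairs.
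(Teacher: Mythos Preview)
Your construction has a genuine gap, which you yourself flag in the last paragraph, and the difficulty is in fact worse than you indicate. Open Whitney cubes with pairwise disjoint interiors are pairwise \emph{disjoint} as open sets (their shared faces belong to neither), so $\bigcup_n Q_n$ misses the entire $(d-1)$-dimensional grid of cube faces inside $\O_1\setminus\overline{\O_0}$, not only $\partial\O_0\cap\O_1$. Consequently every $\O_t$ in your family is a disjoint union of $\O_0$ with finitely many open cubes and one partial cube; your additivity formula is then globally exact, and
\[
\lim_{t\to1^-}T(\O_t)=T(\O_0)+\sum_n T(Q_n)<T(\O_1)
\]
whenever $\O_1\setminus\overline{\O_0}$ has nonempty interior. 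Thus the family does not $\gamma$-approach $\O_1$, and repairing this requires precisely a ``gluing across sets of positive capacity'' step, which is the crux of the whole problem. Your suggested fixes (straddling cubes, a final thickening) are not carried out, and making them rigorous is not easier than the original statement.

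The paper bypasses all of this with a short and quite different argument. One takes a Peano curve $\Gamma:[0,1]\to C$ onto a cube $C\supset\O_1$ with $\Gamma(0)\in\O_0$, and sets
\[
\O_t=\big(\O_1\setminus\Gamma([0,1-t])\big)\cup\O_0.
\]
Each $\Gamma([0,1-t])$ is compact, so $\O_t$ is open; monotonicity in $t$ is immediate; the endpoints are correct since $\Gamma([0,1])=C\supset\O_1$ while $\Gamma(\{0\})$ is a single point lying in $\O_0$. For $\gamma$-continuity one notes that $\O_t\triangle\O_{t_n}$ is contained in the arc $\Gamma$ over the parameter interval between $1-t$ and $1-t_n$, which by uniform continuity of $\Gamma$ has diameter at most $2\omega(|t-t_n|)$ and hence capacity tending to zero (here $d\ge2$ is used). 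The idea is to \emph{remove} a shrinking connected compact set from $\O_1$ rather than to \emph{add} disconnected pieces to $\O_0$; this avoids the gluing issue entirely.
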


\begin{proof}
Let us denote by $C$ a large cube containing $\O_1$ and by $\Gamma(t)$ a Peano curve from $[0,1]$ onto $C$, that is a continuous mapping $\Gamma:[0,1]\to\R^d$ such that $\Gamma([0,1])=C$; we also choose $\Gamma(0)\in\O_0$. We define
$$\O_t=\big(\O_1\setminus\Gamma([0,1-t])\big)\cup\O_0\qquad\text{for every }t\in[0,1].$$
Note that $\O_t$ are open subsets of $\R^d$ and that for $t=0$ we obtain $\O_0$, while for $t=1$ we obtain $\O_1$. The family $\O_t$ above clearly satisfies the monotonicity property \eqref{monot}.

In order to show that the family $\O_t$ is $\gamma$-continuous, it is enough to prove that
$$\cp(\O_{t_n}\triangle\O_t)\to0\qquad\text{whenever }t_n\to t.$$
This comes from the fact that the mapping $\Gamma(t)$ is uniformly continuous, so that
$$|\Gamma(t)-\Gamma(t_n)|\le\omega(|t-t_n|)$$
for a suitable modulus of continuity $\omega$. Therefore $\O_t$ and $\O_{t_n}$ differ by a set which has a diameter less than $2\omega(|t-t_n|)$, hence of capacity which vanishes as $t_n\to t$.
\end{proof}

\begin{rema}
Since the proof of Theorem \ref{01} is only based on capacitary arguments, the same statement is valid in the more general case when $\O_0$ and $\O_1$ are quasi-open sets.
\end{rema}

\begin{rema}\label{poly}
When working with polyhedral domains (i.e. whose boundary is made of a finite number of subsets of hyperplanes) we are in the situation above. In fact, if $\O$ is a polyhedral domain, the Brock's construction provides a family $\O_t$ made of polyhedral domains, and we have a finite number of discontinuity points $t_1$, $t_2$, \dots $t_N$. In addition, for every discontinuity point $t_k$, the fracture $S$ is a $d-1$ dimensional polyhedral set, $\O^-=\O_{t_k}$ while $\O^+=\O_{t_k}\setminus S$, and then Theorem \ref{01} applies.
\end{rema}

In several situations (see for instance \cite{BP21}), thanks to the $\gamma$-density of polyhedral domains in the class of all domains, Remark \ref{poly} is sufficient to achieve the required goals. However, the question of existence of $\gamma$-continuous paths $(\O_t)$, with monotone $\lambda(\O_t)$ and $T(\O_t)$, between a general domain $\O_0$ and the ball $B$ with the same Lebesgue measure, remains.

Similar questions arise if, instead of the quantities $\lambda(\O_t)$ and $T(\O_t)$, one considers for instance the perimeter $P(\O_t)$, requiring the continuity of the map $t\mapsto P(\O_t)$ and its decreasing monotonicity.

The procedure of {\it removing fractures} mentioned after \eqref{o+o-} needs to be more rigorous. This can be made through the following result.

\begin{prop}
Let $\O_0$ be a given quasi open set and let $m\ge|\O_0|$. Then there exists a quasi open set $\hat\O$ solving the shape optimization problem
$$\min\big\{\lambda(\O)\ :\ \O_0\subset\O,\ |\O|\le m\big\}.$$
\end{prop}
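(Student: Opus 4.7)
The plan is to apply the direct method in the space $\M(D)$ of capacitary measures confined to a fixed bounded ambient box $D\supset\O_0$, then carve out a quasi-open minimizer from the resulting limit. I would start by picking a minimizing sequence $(\O_n)$ of quasi-open sets satisfying $\O_0\subset\O_n\subset D$, $|\O_n|\le m$, and $\lambda(\O_n)\to\ell:=\inf$. By the compactness of $(\M(D),d_\gamma)$ recalled in Section \ref{sgamma}, along a subsequence $(\O_n)$ $\gamma$-converges to some $\mu\in\M(D)$, and the $\gamma$-continuity of $\lambda$ gives $\lambda(\mu)=\ell$. The issue is that $\mu$ need not be of the form $\infty_{\O^c}$ for a quasi-open $\O$ (quasi-open sets are only dense in $\M$), so it remains to produce a genuine quasi-open set realizing this value.

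Next, I would introduce the torsion functions $w_n:=w_{\O_n}$ and the solution $w:=u_\mu$ of $-\Delta w+\mu w=1$ on $D$; by $\gamma$-convergence, $w_n\to w$ in $L^1(D)$, and after a further extraction also almost everywhere. My candidate minimizer is $\hat\O:=\{w>0\}$, which is quasi-open as the positivity set of an $H^1$-function. Three admissibility/optimality properties need to be verified. First, the inclusion $\O_0\subset\hat\O$ q.e.: by comparison on $\O_0\subset\O_n$ we have $w_{\O_0}\le w_n$, hence in the limit $w_{\O_0}\le w$, and since $\{w_{\O_0}>0\}=\O_0$ q.e.\ by the strong maximum principle, $\O_0\subset\hat\O$ q.e. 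Second, the measure bound: pointwise a.e.\ convergence of $w_n$ to $w$ forces $\hat\O\subset\liminf_n\O_n$ up to a null set, and Fatou's lemma yields $|\hat\O|\le\liminf_n|\O_n|\le m$. Third, the eigenvalue bound $\lambda(\hat\O)\le\ell$: letting $u_n$ be the nonnegative normalized first eigenfunction of $\O_n$, a standard $L^\infty$-estimate (uniform along the minimizing sequence, since both $\lambda(\O_n)$ and $|\O_n|$ are bounded) gives $\|u_n\|_\infty\le C$, and then comparison of $-\Delta u_n\le\lambda(\O_n)C$ against $-\Delta w_n=1$ yields $u_n\le C' w_n$ pointwise; passing to a weak $H^1$-limit $u$ of $u_n$, one obtains $u\le C' w$ a.e., so $u\in H^1_0(\hat\O)$, and by weak $H^1$ lower semicontinuity together with the $L^2$-normalization,
\[
\lambda(\hat\O)\le\frac{\int|\nabla u|^2}{\int u^2}\le\liminf_n\lambda(\O_n)=\ell.
\]
Since $\hat\O$ is admissible, the reverse inequality is trivial, so $\lambda(\hat\O)=\ell$ and $\hat\O$ solves the problem.

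The main obstacle is precisely this last extraction step: manufacturing a quasi-open set $\hat\O$ out of the capacitary-measure limit $\mu$ while simultaneously preserving all three of the conditions $\O_0\subset\hat\O$, $|\hat\O|\le m$, and $\lambda(\hat\O)\le\lambda(\mu)$. The choice $\hat\O=\{w>0\}$ makes the first two essentially automatic via comparison and Fatou, but the eigenvalue bound hinges on the pointwise domination $u_n\le C' w_n$, which itself rests on a uniform $L^\infty$ estimate for the first eigenfunctions along the minimizing sequence. This is the technical ingredient I would be most careful to justify, for instance by a Moser iteration argument exploiting the uniform bounds on $\lambda(\O_n)$ and $|\O_n|$.
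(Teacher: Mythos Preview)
Your argument is correct and is essentially a reconstruction of the Buttazzo--Dal Maso existence theorem, which is exactly what the paper invokes (in one line, via \cite{BDM93}) as its proof. The compactness step in $\M(D)$ followed by extracting $\hat\O=\{u_\mu>0\}$, together with the domination $u_n\le C'w_n$ obtained from a uniform $L^\infty$ bound on first eigenfunctions, is precisely the machinery of that reference; the estimate you flag at the end is standard (for instance $\|u\|_\infty\le C(d)\lambda^{d/4}$ by Moser iteration), so there is no genuine gap, and your implicit confinement to a bounded box $D$ matches the standing assumption of Section~\ref{sgamma}.
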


\begin{proof}
The proof can be obtained directly by applying the existence result of \cite{BDM93}.
\end{proof}

In an analogous way we can obtain a solution for the shape optimization problem
$$\max\big\{T(\O)\ :\ \O_0\subset\O,\ |\O|\le m\big\}.$$
In particular, the case $m=|\O_0|$ is interesting; this allows to obtain, for every given $\O_0$, an optimal domain $\hat\O$ containing $\O_0$ and with the same measure as $\O_0$, which solves simultaneously the two shape optimization problems
$$\begin{cases}
\min\big\{\lambda(\O)\ :\ \O_0\subset\O,\ |\O|=|\O_0|\big\},\\
\max\big\{T(\O)\ :\ \O_0\subset\O,\ |\O|=|\O_0|\big\}.
\end{cases}$$
Indeed, if $\O_1$ is an optimal domain for the eigenvalue optimization problem and $\O_2$ an optimal domain for the torsion optimization problem, it is enough to take $\hat\O=\O_1\cup\O_2$.

In other words, if $\O_0$ is a Lipschitz domain, we have $\hat\O=\O_0$ while, in the case the set $\O_0$ presents some internal fractures, the set $\hat\O$ removes them.

%%%%%%%%%%%%%%%%%%%%%%%%%%%%%%%%%%%%%%%%%%%%%%%%%%
\section{The minimizing movement approach}\label{sminmov}

An alternative approach to the Brock's construction of the family $\O_t$ through the Continuous Steiner Simmetrization could be the use of the De Giorgi minimizing movement theory, introduced in \cite{DG93} (see for instance \cite{A95}, \cite{AGS08} for a detailed presentation and further developments).

In our framework of shape functionals, the metric space $X$ could be the one of all measurable subsets $\O$ of the Euclidean space $\R^d$ with a prescribed Lebesgue measure, say $|\O|=1$, endowed with the $L^1$ distance
$$d(\O_1,\O_2)=|\O_1\triangle\O_2|.$$
Given a shape functional $F$ defined on $X$ one can consider the so-called {\it implicit Euler scheme} of time step $\eps$ and initial condition $\O_0$, which provides a discrete family $\O_{n,\eps}$ constructed recursively in the following way:
$$\O_{0,\eps}=\O_0,\qquad\O_{n+1,\eps}\in\argmin_{\O\in X}\Big\{F(\O)+\frac{1}{2\eps}|\O\triangle\O_{n,\eps}|^2\Big\}.$$
We may then set $\O_{t,\eps}=\O_{[t/\eps],\eps}$, where $[\cdot]$ stands for the integer part function, and say that $\O_t$ is a family of sets constructed by the minimizing movement procedure associated to the shape functional $F$ if for every $t\in[0,T]$ we have
$$|\O_t\triangle\O_{[t/\eps],\eps}|\to0\quad\text{as }\eps\to0.$$
If the limit above occurs only for a sequence $(\eps_n)$ (independent of $t$), we say that $\O_t$ is a generalized minimizing movement.

It is easy to see that the discrete sequence $\O_{n,\eps}$ is such that $F(\O_{n,\eps})$ decreases. It would be interesting to show, at least in the particular cases when the shape functional $F(\O)$ is the first eigenvalue $\lambda(\O)$, the opposite $-T(\O)$ of the torsional rigidity, or the perimeter $P(\O)$, or some convex combination of them, that the map $t\mapsto F(\O_t)$ is continuous and decreasing.

We do not know if the map $t\mapsto F(\O_t)$ above is continuous and decreasing, and the cases in which, as $t\to\infty$, the limit domain is a ball. Some results in this direction, in the case $F(\O)=P(\O)$ can be found in \cite{MPS22}, while some partial results in the case of spectral functionals can be found in \cite{MS22}.

%%%%%%%%%%%%%%%%%%%%%%%%%%%%%%%%%%%%%%%%%%%%%%%%%%

\bigskip

\noindent{\bf Acknowledgments.} This work is part of the project 2017TEXA3H {\it``Gradient flows, Optimal Transport and Metric Measure Structures''} funded by the Italian Ministry of Research and University. The author is member of the Gruppo Nazionale per l'Analisi Matematica, la Probabilit\`a e le loro Applicazioni (GNAMPA) of the Istituto Nazionale di Alta Matematica (INdAM).

\bigskip

\bigskip
\small\noindent
Giuseppe Buttazzo: Dipartimento di Matematica, Universit\`a di Pisa\\
Largo B. Pontecorvo 5, 56127 Pisa - ITALY\\
{\tt giuseppe.buttazzo@unipi.it}\\
{\tt http://www.dm.unipi.it/pages/buttazzo/}

\end{document}